\newtheorem{theorem}{Theorem}[section]
\newtheorem{corollary}[theorem]{Corollary}
\newtheorem{proposition}[theorem]{Proposition}
\newproof{proof}{Proof}
\numberwithin{equation}{section}
\newcommand{\CC}{C_k}
\newcommand{\NN}{\mathbb{N}}
\newcommand{\w}{\omega}
\newcommand{\KK}{\mathcal{K}}
\newcommand{\IR}{\mathbb{R}}
\newcommand{\e}{\varepsilon}
\newcommand{\cl}{\mathrm{cl}}
\renewcommand{\phi}{\varphi}
\newcommand{\SI}{\underrightarrow{\mbox{\rm s-ind}}_n\,}
\begin{document}

\begin{frontmatter}

\title{The $\kappa$-Fr\'{e}chet--Urysohn property for locally convex spaces}

\author{S.~Gabriyelyan}%\tnotetext[label1]{The first named author was partially supported by Israel Science Foundation grant 1/12.}
\ead{saak@math.bgu.ac.il}
\address{Department of Mathematics, Ben-Gurion University of the Negev, Beer-Sheva, P.O. 653, Israel}

\begin{abstract}
A topological space $X$ is $\kappa$-Fr\'{e}chet--Urysohn if for every open subset $U$ of $X$ and every $x\in \overline{U}$ there exists a sequence in $ U$ converging to $x$.
We prove that every $\kappa$-Fr\'{e}chet--Urysohn Tychonoff space $X$ is Ascoli. We apply this statement and some of known results to characterize the $\kappa$-Fr\'echet--Urysohn property in various important classes of locally convex spaces. In particular, answering a question posed in \cite{GGKZ} we obtain that $C_p(X)$ is Ascoli iff $X$ has the property $(\kappa)$.
%For a Tychonoff space  $X$,  denote by $C_p(X)$ and $\CC(X)$ the space $C(X)$ of all real-valued continuous functions on $X$ endowed with the pointwise topology and the compact-open topology, respectively. The space $X$ is Ascoli if every compact subset of $\CC(X)$ is evenly continuous. We prove that every $\kappa$-Fr\'{e}chet--Urysohn space $X$ is Ascoli. We apply this statement and some of the main results from \cite{Banakh-Survey,Gabr-LCS-Ascoli,Gab-LF,Gabr-L(X)-Ascoli,GGKZ,GKP,Sak2} to characterize the $\kappa$-Fr\'echet--Urysohn property in various important classes of locally convex spaces. In particular, answering a question posed in \cite{GGKZ} we show that $C_p(X)$ is Ascoli iff $X$ has the property $(\kappa)$.
\end{abstract}

\begin{keyword}
$\kappa$-Fr\'{e}chet--Urysohn \sep Ascoli space \sep $C_p(X)$ \sep $\CC(X)$ \sep  Banach space \sep  weak topology

\MSC[2010]  46A03 \sep   46A08 \sep   54C35

\end{keyword}

\end{frontmatter}

%%%%%%%%%%%%%%%%%%%%%%%%%%%
%%%%%%%%%%%%%%%%%%%%%%%%%%%
%%%%%%%%%%%%%%%%%%%%%%%%%%%
%%%%%%%%%%%%%%%%%%%%%%%%%%%

%%%%%%%%%%%%%%%%%%%%%%%%%%%
%%%%%%%%%%%%%%%%%%%%%%%%%%%
%%%%%%%%%%%%%%%%%%%%%%%%%%%
%%%%%%%%%%%%%%%%%%%%%%%%%%%

\section{Introduction}

%%%%%%%%%%%%%%%%%%%%%%%%%%%
%%%%%%%%%%%%%%%%%%%%%%%%%%%
%%%%%%%%%%%%%%%%%%%%%%%%%%%
%%%%%%%%%%%%%%%%%%%%%%%%%%%

Following Arhangel'skii, a topological space $X$ is said to be {\em $\kappa$-Fr\'{e}chet--Urysohn} if for every open subset $U$ of $X$ and every $x\in \overline{U}$, there exists a sequence $\{ x_n\}_{n\in\NN} \subseteq U$ converging to $x$. Clearly, every Fr\'{e}chet--Urysohn space is $\kappa$-Fr\'{e}chet--Urysohn.  In \cite[Theorem~3.3]{LiL} Liu and Ludwig showed that a topological space $X$ is $\kappa$-Fr\'{e}chet--Urysohn if and only if $X$ is a $\kappa$-pseudo open image of a metric space. Below we give another characterization of $\kappa$-Fr\'{e}chet--Urysohn spaces, see \ref{t:Char-kFU}. It is known that there are $\kappa$-Fr\'{e}chet--Urysohn spaces which are not $k$-spaces, and there are sequential spaces which are not $\kappa$-Fr\'{e}chet--Urysohn, see \cite{LiL} or Proposition \ref{p:kFU-direct-sum} below.

Let $X$ be a Tychonoff (=completely refular and Hausdorff) space. Denote by $\CC(X)$ and $C_p(X)$ the space $C(X)$ of all real-valued continuous functions on $X$ endowed with the compact-open topology and the pointwise topology, respectively.  Following \cite{BG}, $X$ is called an {\em Ascoli space} if every compact subset $\KK$ of $\CC(X)$ is evenly continuous (i.e., if the map $(f,x)\mapsto f(x)$ is continuous as a map from $\KK\times X$ to $\IR$). In \cite{Gabr-LCS-Ascoli}  we noticed that $X$ is Ascoli if and only if every compact subset of $\CC(X)$ is equicontinuous.  The classical Ascoli theorem \cite[Theorem~3.4.20]{Eng} states that every $k$-space is Ascoli. %Moreover, every $k_\IR$-space is Ascoli by \cite{Noble}.

In \cite[Theorem 2.1]{Sak2}, Sakai characterized those spaces $C_p(X)$ which are $\kappa$-Fr\'{e}chet--Urysohn. Recall that a family $\{ A_i\}_{i\in I}$ of subsets of a set $X$ is said to be {\em  point-finite} if the set $\{i\in I: x\in A_i\}$ is finite for every $x\in X$. A family $\{ A_i\}_{i\in I}$ of subsets of a topological space $X$ is called {\em strongly point-finite} if for every $i\in I$, there exists an open set $U_i$ of $X$ such that $A_i\subseteq U_i$ and  $\{ U_i\}_{i\in I}$ is point-finite.  Following Sakai \cite{Sak2}, a topological space $X$ is said to have the {\em property $(\kappa)$} if every sequence of pairwise disjoint finite subsets  of $X$ has a strongly point-finite subsequence.
\begin{theorem}[\cite{Sak2}] \label{t:Sakai-Cp-k-FU}
The space $C_p(X)$ is $\kappa$-Fr\'{e}chet--Urysohn if and only if $X$ has the property $(\kappa)$.
\end{theorem}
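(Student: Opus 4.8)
The plan is to prove both implications by translating the topology of $C_p(X)$ near a point into the combinatorics of finite subsets of $X$. Since $C_p(X)$ is a topological vector space, translations are homeomorphisms, so in testing the $\kappa$-Fr\'echet--Urysohn property it suffices to treat a base point of the form $\mathbf{0}$ (the zero function): for open $U$ with $f_0\in\overline{U}$ one passes to $U-f_0$ and $\mathbf{0}\in\overline{U-f_0}$. I will use throughout that the basic neighborhoods of $\mathbf{0}$ are $W(\mathbf{0};K;\varepsilon)=\{f:|f(x)|<\varepsilon\text{ for }x\in K\}$ with $K\subseteq X$ finite, and that $\mathbf{0}\in\overline{U}$ means exactly that every such $W$ meets $U$.

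For the implication ``$\kappa$-Fr\'echet--Urysohn $\Rightarrow(\kappa)$'', given a sequence $\{F_n\}$ of pairwise disjoint finite sets I would introduce the open set $U=\bigcup_n\{f\in C_p(X):f(x)>1\text{ for all }x\in F_n\}$ (each term is a finite intersection of subbasic open sets, hence open). Complete regularity separates any finite $K$ from some $F_n$ disjoint from it, which gives $\mathbf{0}\in\overline{U}$. The $\kappa$-Fr\'echet--Urysohn property then yields $g_k\in U$ with $g_k\to\mathbf{0}$ pointwise, and each $g_k$ exceeds $1$ on some $F_{n_k}$. Pointwise convergence forces every index to recur only finitely often (else $g_k(x)>1$ infinitely often at a point $x$ of that $F_n$), so after thinning I get a genuine subsequence $\{F_{n_k}\}$, and the open sets $V_k=\{g_k>1\}\supseteq F_{n_k}$ form a point-finite family since $g_k(x)\to0$ for each $x$. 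Thus $\{F_{n_k}\}$ is strongly point-finite and $X$ has property $(\kappa)$.

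The substantial direction is ``$(\kappa)\Rightarrow\kappa$-Fr\'echet--Urysohn'', which I would prove contrapositively: assuming an open $U$ with $\mathbf{0}\in\overline{U}$ carries no sequence converging to $\mathbf{0}$, I manufacture a sequence of pairwise disjoint finite sets violating $(\kappa)$. Recursively I pick $g_n\in U$ with $|g_n|<1/n$ on $E_{n-1}:=\bigcup_{i<n}D_i$ (possible as $\mathbf{0}\in\overline{U}$) and, since $U$ is open, a basic neighborhood $W(g_n;D_n;\delta_n)\subseteq U$ with $D_n$ finite; setting $F_n:=D_n\setminus E_{n-1}$ makes the $F_n$ pairwise disjoint, and $D_n=F_n\sqcup R_n$ records an ``old part'' $R_n\subseteq E_{n-1}$ on which $g_n$ is already small. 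If infinitely many $F_n$ are empty, then along that subsequence $g_n$ is small on all of $D_n$, and multiplying $g_n$ by a continuous bump equal to $1$ on $D_n$ and supported in $\{|g_n|<1/n\}$ produces functions in $U$ of sup-norm below $1/n$, i.e.\ a sequence in $U$ converging to $\mathbf{0}$ --- a contradiction. Otherwise cofinitely many $F_n$ are nonempty, and I apply $(\kappa)$ to $\{F_n\}$ to extract a subsequence $\{F_{n_k}\}$ with open $O_k\supseteq F_{n_k}$ and $\{O_k\}$ point-finite. I then set $h_k:=g_{n_k}\cdot\phi_k$, where $\phi_k$ is continuous, equal to $1$ on all of $D_{n_k}$, and supported in $O_k\cup W_k$ for an open set $W_k\subseteq\{|g_{n_k}|<1/n_k\}$ containing $R_{n_k}$. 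Then $h_k=g_{n_k}$ on $D_{n_k}$ gives $h_k\in W(g_{n_k};D_{n_k};\delta_{n_k})\subseteq U$, while for each fixed $x$ point-finiteness of $\{O_k\}$ forces, for all large $k$, either $x\notin O_k\cup W_k$ (so $h_k(x)=0$) or $x\in W_k$ (so $|h_k(x)|<1/n_k$); hence $h_k\to\mathbf{0}$, the desired contradiction.

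I expect the main obstacle to be precisely this last reconstruction in the nonempty case: one must keep each $h_k$ inside $U$ --- which pins $h_k$ to $g_{n_k}$ on the finite witness set $D_{n_k}$ --- while forcing $h_k\to\mathbf{0}$ pointwise over a possibly uncountable $X$, which pushes $h_k$ to vanish off a point-finite family. Reconciling these is exactly what property $(\kappa)$ supplies: the point-finiteness of $\{O_k\}$ is the substitute for an enumeration of $X$, which is unavailable when $X$ is uncountable, and the old part $R_n$ must be absorbed through the smallness $|g_n|<1/n$ rather than through the point-finite family. The disjointification $F_n=D_n\setminus E_{n-1}$ and the clean split into the empty/nonempty cases are the bookkeeping that makes the argument close.
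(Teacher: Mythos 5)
Your proposal is correct, but note that the paper itself contains no proof of this statement: Theorem \ref{t:Sakai-Cp-k-FU} is imported verbatim from Sakai's paper \cite{Sak2}, so there is no internal argument to compare against. What you have written is a valid self-contained proof, and it is essentially a reconstruction of Sakai's original argument: the necessity direction via the open set $U=\bigcup_n\{f: f>1 \text{ on } F_n\}$, whose closure contains $\mathbf{0}$ by complete regularity, with the sets $\{g_k>1\}$ witnessing strong point-finiteness; and the sufficiency direction via the recursive choice of $g_n\in U$ small on the previously used finite sets, disjointification $F_n=D_n\setminus E_{n-1}$, the empty/nonempty case split, and bump functions supported in $O_k\cup W_k$ to force pointwise convergence while staying pinned to $g_{n_k}$ on the witness set $D_{n_k}$. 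Both directions check out: in particular, the key verification in the sufficiency direction --- that $h_k\in W(g_{n_k};D_{n_k};\delta_{n_k})\subseteq U$ while $|h_k(x)|\le\max(0,1/n_k)$ for all $k$ beyond the finitely many with $x\in O_k$ --- is sound. The only loose end is a degenerate one: in the necessity direction your thinning argument tacitly assumes the $F_n$ are nonempty (an empty $F_n$ makes its cylinder all of $C_p(X)$, so the index $n_k$ could repeat forever); this is repaired in one line, since a sequence containing infinitely many empty sets trivially has a strongly point-finite subsequence, and otherwise one discards the finitely many empty terms, exactly as you already do in the sufficiency direction.
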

A characterization of the spaces $\CC(X)$ which are $\kappa$-Fr\'{e}chet--Urysohn is given in \cite{Sakai-3}.

In \cite{GGKZ} we proved the following theorem.
\begin{theorem}[\cite{GGKZ}] \label{t:Cp-Ascoli-k-FU}
If $C_p(X)$ is Ascoli,  then it is $\kappa$-Fr\'echet--Urysohn.
\end{theorem}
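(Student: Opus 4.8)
The plan is to deduce the statement from Sakai's characterisation (Theorem~\ref{t:Sakai-Cp-k-FU}): since $C_p(X)$ is $\kappa$-Fr\'echet--Urysohn precisely when $X$ has the property $(\kappa)$, it suffices to show that if $C_p(X)$ is Ascoli then $X$ has $(\kappa)$, and I would prove the contrapositive. So assume $X$ fails $(\kappa)$ and fix a witnessing sequence $\{F_n\}_{n\in\NN}$ of pairwise disjoint finite subsets of $X$, no subsequence of which is strongly point-finite; the goal is to produce a compact, non-equicontinuous subset of $C_p(X)$, which by the equivalence recalled in the introduction (Ascoli $=$ every compact set is equicontinuous) contradicts the Ascoli property. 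The first concrete step is to form the open set
\[
U=\bigcup_{n\in\NN}\{\, f\in C_p(X): f(x)>1 \text{ for all } x\in F_n\,\},
\]
each member of the union being open as a finite intersection of subbasic sets. Using that the $F_n$ are pairwise disjoint (so any finite $K\subseteq X$ meets only finitely many of them) together with complete regularity, one checks that every basic neighbourhood $\{f:|f|<\eps \text{ on } K\}$ of the zero function $\mathbf{0}$ meets $U$; hence $\mathbf{0}\in\overline U$.

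The link between $U$ and the combinatorics of $\{F_n\}$ is the following dictionary, which I would isolate as a lemma. A sequence $f_k\in U$ with $f_k\to\mathbf{0}$ cannot exist: writing $V_k=\{x:f_k(x)>1\}\supseteq F_{n_k}$ for an index $n_k$ witnessing $f_k\in U$, pointwise convergence $f_k\to\mathbf{0}$ forces each value $n$ to be used only finitely often and each point of $X$ to lie in only finitely many $V_k$, so after passing to a subsequence $\{F_{n_k}\}$ would be strongly point-finite, contradicting the choice of $\{F_n\}$. This already shows $C_p(X)$ is not $\kappa$-Fr\'echet--Urysohn, reproving one direction of Theorem~\ref{t:Sakai-Cp-k-FU}; the remaining and essential task is to upgrade ``no sequence of $U$ converges to $\mathbf{0}$'' into an honest compact witness against equicontinuity. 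For this I would build $\KK$ from uniformly bounded bump functions $h_n\in C_p(X)$ with $0\le h_n\le 2$ and $h_n>1$ on $F_n$, and take $\KK$ to be the pointwise closure in $\IR^X$ of a carefully thinned family of selections of the $h_n$, so that $\KK$ is pointwise bounded and closed in $\IR^X$ (hence compact) while the failure of strong point-finiteness guarantees a point, or a net of points, at which the members of $\KK$ oscillate by at least $1$, i.e.\ at which $\KK$ is not equicontinuous.

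The main obstacle is precisely this last construction: one must choose the family defining $\KK$ so that two competing requirements hold simultaneously, namely that every pointwise limit of the chosen functions is again continuous (so that $\KK\subseteq C_p(X)$ and $\KK$ is genuinely compact) and that the non--point-finiteness of the $F_n$ translates into a genuine failure of even continuity. The subtlety is that the ``accumulation'' forced by the failure of $(\kappa)$ need not occur at a point of $X$ at all (it may live in $\beta X\setminus X$), so one cannot simply exhibit a single $x_0\in X$ together with functions that vanish at $x_0$ yet equal $1$ arbitrarily near it. I would therefore argue through the net formulation of even continuity rather than through a single limit point, using the hypothesis in the strong form ``for every infinite $M\subseteq\NN$ and every choice of open $U_n\supseteq F_n$ the family $\{U_n\}_{n\in M}$ has a point of infinite multiplicity'' to extract the oscillating net directly. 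Once such a compact, non-equicontinuous $\KK$ is in hand, the contradiction with the Ascoli property is immediate and the theorem follows.
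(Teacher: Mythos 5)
Two remarks before the substance: the paper itself contains no proof of this theorem --- it is quoted from \cite{GGKZ} --- so your proposal can only be judged against what a correct proof requires. Your opening reduction is sound: by Theorem \ref{t:Sakai-Cp-k-FU} it suffices to show that if $X$ fails property $(\kappa)$ then $C_p(X)$ is not Ascoli. Your preliminary lemma (the open set $U$, the fact that $\mathbf{0}\in\overline{U}$, and that no sequence in $U$ converges to $\mathbf{0}$) is also correct; but note that by itself it only re-proves Sakai's implication ``$C_p(X)$ $\kappa$-Fr\'echet--Urysohn $\Rightarrow$ $X$ has $(\kappa)$'', and it cannot be converted into information about the Ascoli property except through the very theorem you are trying to prove.

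The genuine gap is the step you yourself call the main obstacle, and it is not a technical difficulty but a category error: your intended witness lives in the wrong function space. To contradict ``$C_p(X)$ is Ascoli'' you must exhibit a compact subset of $\CC\big(C_p(X)\big)$ --- a family of continuous real-valued functions \emph{defined on the space} $C_p(X)$, compact in the compact-open topology --- which is not equicontinuous at some point $f\in C_p(X)$. Your $\KK$ is instead a pointwise-compact subset of $C_p(X)$ itself, i.e.\ a family of functions on $X$, required to be non-equicontinuous at a point of $X$. The existence of such a $\KK$ is perfectly compatible with $C_p(X)$ being Ascoli. Concretely, let $X=\{0\}\cup\{1/n:n\in\NN\}$ be a convergent sequence and let $f_n\in C(X)$ satisfy $f_n(1/n)=1$ and $f_n=0$ elsewhere (continuous, since $1/n$ is isolated). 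Then $\KK=\{f_n:n\in\NN\}\cup\{\mathbf{0}\}$ is compact in $C_p(X)$ and is not equicontinuous at $0\in X$; nevertheless $C_p(X)$ is metrizable (because $X$ is countable, $C_p(X)$ embeds in $\IR^{\w}$), hence a $k$-space, hence Ascoli by the classical Ascoli theorem quoted in the introduction. So even a complete execution of your construction would prove nothing. A correct proof along your reduction must instead manufacture from the sets $F_n$ continuous maps $g_n\colon C_p(X)\to\IR$ built from evaluations at the points of $F_n$ (for instance $g_n(f)=\min_{x\in F_n}\max\{0,\min\{1,f(x)\}\}$, which one checks is not an equicontinuous family at the constant function $\mathbf{1}$), and then --- this is where all the substance lies, and it is entirely absent from your proposal --- use the failure of strong point-finiteness to prove that this family has compact closure in $\CC\big(C_p(X)\big)$, i.e.\ to control its behaviour uniformly on the compact subsets of $C_p(X)$.
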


However, the question (see \cite[Question~2.4]{GGKZ}) of whether every $\kappa$-Fr\'echet--Urysohn space $C_p(X)$ is Ascoli remained open. In this short note we answer this question in the affirmative using the following somewhat unexpected result.

\begin{theorem} \label{t:k-FU-seq-Ascoli}
Each $\kappa$-Fr\'{e}chet--Urysohn space $X$ is Ascoli.
\end{theorem}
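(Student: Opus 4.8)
The plan is to use the equivalent form of the Ascoli property recorded above: $X$ is Ascoli if and only if every compact subset $\KK$ of $\CC(X)$ is equicontinuous. So I fix a compact $\KK\subseteq\CC(X)$ and a point $x_0\in X$ and argue by contradiction that $\KK$ is equicontinuous at $x_0$. Recall that this means: for every $\eps>0$ there is a neighbourhood $V$ of $x_0$ with $|f(x)-f(x_0)|<\eps$ for all $x\in V$ and all $f\in\KK$.

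First I would produce the ``bad'' open set. Assume $\KK$ is not equicontinuous at $x_0$ and fix $\eps>0$ witnessing the failure, so that every neighbourhood of $x_0$ contains some $x$ with $|f(x)-f(x_0)|>\eps$ for some $f\in\KK$. Set $U=\bigcup_{f\in\KK}\{x\in X:\ |f(x)-f(x_0)|>\eps\}$. Each set in this union is open because $f$ is continuous, hence $U$ is open; and the failure of equicontinuity at $x_0$ says exactly that every neighbourhood of $x_0$ meets $U$, i.e.\ $x_0\in\overline U$ (while $x_0\notin U$). This is the step where the $\kappa$-Fr\'echet--Urysohn hypothesis enters: it yields a sequence $\{x_n\}\subseteq U$ with $x_n\to x_0$, and for each $n$ I may pick $f_n\in\KK$ with $|f_n(x_n)-f_n(x_0)|>\eps$.

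The key idea is then to restrict everything to the compact metrizable set $S=\{x_n:n\in\NN\}\cup\{x_0\}$. The restriction map $r\colon\CC(X)\to C(S)$ is continuous, and since $S$ is compact the space $C(S)$ carries the topology of uniform convergence, so it is a Banach space. Therefore $r(\KK)$ is a compact, hence sequentially compact, subset of the metric space $C(S)$, and after passing to a subsequence I obtain $g\in C(S)$ with $f_{n_k}\uhr S\to g$ uniformly on $S$.

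Finally the contradiction follows from a three-term estimate: $|f_{n_k}(x_{n_k})-f_{n_k}(x_0)|\le |f_{n_k}(x_{n_k})-g(x_{n_k})|+|g(x_{n_k})-g(x_0)|+|g(x_0)-f_{n_k}(x_0)|$. The two outer terms tend to $0$ by uniform convergence on $S$, and the middle term tends to $0$ because $g$ is continuous on $S$ and $x_{n_k}\to x_0$; this contradicts $|f_{n_k}(x_{n_k})-f_{n_k}(x_0)|>\eps$. I expect the main obstacle to be exactly the middle step: a compact subset of $\CC(X)$ need not be metrizable or sequentially compact, so one cannot extract a uniformly convergent subsequence of $(f_n)$ directly inside $\KK$. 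Restricting to the convergent sequence $S$, where the compact-open topology collapses to the sup-norm topology, is the device that turns the given compactness into the sequential compactness needed to run the estimate.
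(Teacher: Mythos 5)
Your proof is correct and takes essentially the same route as the paper: both construct the open set witnessing the failure of equicontinuity, apply the $\kappa$-Fr\'echet--Urysohn property to get a sequence $x_n\to x_0$ inside it, and then restrict the compact set of functions to $S=\{x_n:n\in\NN\}\cup\{x_0\}$, exploiting that $C(S)$ is a Banach space. The only cosmetic difference is the endgame: the paper invokes the classical Ascoli theorem on the compact space $S$ to conclude that the restricted sequence is equicontinuous at $x_0$, whereas you use sequential compactness of the restricted compact set in the metric space $C(S)$ together with a three-term estimate --- the two finishes are interchangeable.
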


Now Theorems \ref{t:Sakai-Cp-k-FU}-\ref{t:k-FU-seq-Ascoli} immediately imply the following characterization of spaces $C_p(X)$ which are Ascoli.
\begin{corollary} \label{t:Cp-seq-Ascoli}
Let $X$ be a Tychonoff space. Then $C_p(X)$ is Ascoli if and only if $X$ has the property $(\kappa)$.
\end{corollary}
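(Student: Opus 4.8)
The plan is to obtain the corollary as a direct consequence of the three preceding theorems, so that no fresh combinatorial or topological argument is needed beyond what is already in place. Since the statement is a biconditional, I would handle each implication separately, and in each case chain together two of the quoted results through the intermediate notion of the $\kappa$-Fr\'{e}chet--Urysohn property of the function space $C_p(X)$.

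For the forward implication, I would assume that $C_p(X)$ is Ascoli. Theorem \ref{t:Cp-Ascoli-k-FU} then shows that $C_p(X)$ is $\kappa$-Fr\'{e}chet--Urysohn, and Sakai's characterization, Theorem \ref{t:Sakai-Cp-k-FU}, translates this property of the function space into the property $(\kappa)$ of the ground space $X$. For the reverse implication, I would assume that $X$ has the property $(\kappa)$; applying Theorem \ref{t:Sakai-Cp-k-FU} in the other direction gives that $C_p(X)$ is $\kappa$-Fr\'{e}chet--Urysohn, and then Theorem \ref{t:k-FU-seq-Ascoli}, the main result of this note, yields at once that $C_p(X)$ is Ascoli. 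Putting the two implications together delivers the equivalence.

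The only point worth checking is that the chain closes with no gap: in particular, that Theorem \ref{t:k-FU-seq-Ascoli} is applicable to $C_p(X)$ itself, which is legitimate because $C_p(X)$ is a Tychonoff space whenever $X$ is. Beyond this, there is no genuine obstacle at the level of the corollary: all the substance has already been absorbed into Theorem \ref{t:k-FU-seq-Ascoli}, whose proof that every $\kappa$-Fr\'{e}chet--Urysohn space is Ascoli carries the real difficulty. The corollary simply records what results from feeding that theorem into the pre-existing equivalences of Sakai and of \cite{GGKZ}, and so I expect its proof to be a few lines of bookkeeping rather than anything requiring new ideas.
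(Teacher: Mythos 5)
Your proposal is correct and is exactly the paper's argument: the paper derives the corollary immediately from Theorems \ref{t:Sakai-Cp-k-FU}--\ref{t:k-FU-seq-Ascoli}, chaining them in precisely the two directions you describe. Nothing is missing.
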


Denote by $\mathcal{D}(\Omega)$  the space  of test functions over an open subset $\Omega$ of $\IR^n$.  In \cite{Gab-LF} we proved that $\mathcal{D}(\Omega)$ and the strong dual $\mathcal{D}'(\Omega)$ of $\mathcal{D}(\Omega)$, the space of distributions, are not Ascoli. Therefore, by Theorem \ref{t:k-FU-seq-Ascoli}, $\mathcal{D}(\Omega)$ and $\mathcal{D}'(\Omega)$ are not  $\kappa$-Fr\'{e}chet--Urysohn spaces.
Below we apply Theorem \ref{t:k-FU-seq-Ascoli} and some of the main results from \cite{Banakh-Survey,Gabr-LCS-Ascoli,Gab-LF,Gabr-L(X)-Ascoli,GKP} to characterize the $\kappa$-Fr\'echet--Urysohness in various important classes of locally convex spaces.

%%%%%%%%%%%%%%%%%%%%%%%%%%%%%%%%
%%%%%%%%%%%%%%%%%%%%%%%%%%%%%%%%
%%%%%%%%%%%%%%%%%%%%%%%%%%%%%%%%
%%%%%%%%%%%%%%%%%%%%%%%%%%%%%%%%

\section{Proof of Theorem \ref{t:k-FU-seq-Ascoli} }

%%%%%%%%%%%%%%%%%%%%%%%%%%%%%%%%
%%%%%%%%%%%%%%%%%%%%%%%%%%%%%%%%
%%%%%%%%%%%%%%%%%%%%%%%%%%%%%%%%
%%%%%%%%%%%%%%%%%%%%%%%%%%%%%%%%

We start from the following characterization of $\kappa$-Fr\'{e}chet--Urysohn spaces. The closure of a subset $A$ of a topological space $X$ is denoted by $\overline{A}$ or $\cl_X(A)$.

\begin{theorem} \label{t:Char-kFU}
A topological space $X$ is $\kappa$-Fr\'{e}chet--Urysohn if and only if each point $x\in X$ is contained in a dense $\kappa$-Fr\'{e}chet--Urysohn subspace of $X$.
\end{theorem}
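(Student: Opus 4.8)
The forward implication is immediate and requires no work: if $X$ is $\kappa$-Fr\'echet--Urysohn, then $X$ itself is a dense $\kappa$-Fr\'echet--Urysohn subspace of $X$ containing any prescribed point $x$. The entire content of the theorem therefore lies in the converse, and the plan is to verify the defining condition of the $\kappa$-Fr\'echet--Urysohn property directly, transferring it from a suitable dense subspace.

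For the converse, I would fix an open set $U\subseteq X$ together with a point $x\in\overline{U}$, and aim to produce a sequence in $U$ converging to $x$. By hypothesis I may choose a dense $\kappa$-Fr\'echet--Urysohn subspace $D\subseteq X$ with $x\in D$. The natural move is to pass to the trace $U\cap D$, which is open in the subspace $D$, and then to invoke the $\kappa$-Fr\'echet--Urysohn property of $D$. Since convergence in $D$ entails convergence in $X$ and $U\cap D\subseteq U$, any sequence extracted inside $D$ will automatically serve as the required sequence in $X$.

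The one step requiring an argument is to check that $x$ lies in the closure of $U\cap D$ computed \emph{within} $D$. For this I would first record the elementary density lemma that $\overline{U\cap D}=\overline{U}$ (closures in $X$) whenever $D$ is dense in $X$ and $U$ is open: the inclusion $\subseteq$ is trivial, and for $\supseteq$ one notes that if $y\in\overline{U}$ then every neighborhood $V$ of $y$ has $V\cap U$ nonempty and open, whence $V\cap U\cap D\neq\emptyset$ by density, so $y\in\overline{U\cap D}$. Applying this lemma gives $x\in\overline{U}=\overline{U\cap D}$, and since $x\in D$ and $\cl_D(U\cap D)=\overline{U\cap D}\cap D$, I conclude $x\in\cl_D(U\cap D)$.

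With $U\cap D$ open in $D$ and $x\in\cl_D(U\cap D)$, the $\kappa$-Fr\'echet--Urysohn property of $D$ yields a sequence $\{x_n\}\subseteq U\cap D$ converging to $x$ in $D$, hence in $X$; as $\{x_n\}\subseteq U$, this completes the verification. I do not expect a genuine obstacle here: the argument is purely point-set topological, and the density lemma does the essential work of relocating the closure condition from $X$ down into the subspace $D$ where the hypothesis can be applied.
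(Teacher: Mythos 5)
Your proposal is correct and follows essentially the same route as the paper: both pass to the trace $U\cap D$, use the density of $D$ together with the openness of $U$ to show $x\in\cl_D(U\cap D)$, and then invoke the $\kappa$-Fr\'echet--Urysohn property of $D$. Your packaging of the key step as the lemma $\overline{U\cap D}=\overline{U}$ is just a mild reformulation of the paper's direct computation with relative neighborhoods $W=W'\cap Y$.
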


\begin{proof}
The necessity is clear. To prove sufficiency, fix an open subset $U$ of $X$ and a point $x\in \overline{U}$. Let $Y$ be a dense $\kappa$-Fr\'{e}chet--Urysohn subspace of $X$ containing $x$. Then $V:= U\cap Y$ is an open subset of $Y$. 
We claim that $x\in \cl_Y(V)$. Indeed, if $W\subseteq Y$ is an open neighborhood of $x$ in $Y$, take an open $W'\subseteq X$ such that $W=W'\cap Y$. Then the set $W' \cap U$ is open in $X$. Since $Y$ is dense in $X$ the set
$
(W'\cap U)\cap Y = (W'\cap Y) \cap (U\cap Y) = W\cap V
$
is not empty. Thus $x\in \cl_Y(V)$ and the claim is proved.
Finally, since $Y$ is $\kappa$-Fr\'{e}chet--Urysohn there is a sequence $\{ y_n\}_{n\in\NN} \subseteq V\subseteq U$ converging to $x$. \qed
\end{proof}

\begin{corollary} \label{c:kFU-dense-subgroup}
Let $Y$ be a dense subset of a homogeneous space (in particular, a topological group) $X$. If $Y$ is $\kappa$-Fr\'{e}chet--Urysohn, then $X$ is also a $\kappa$-Fr\'{e}chet--Urysohn.
\end{corollary}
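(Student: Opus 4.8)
The plan is to deduce this from the internal characterization in Theorem~\ref{t:Char-kFU}, using homogeneity to transport the single dense witness $Y$ so that it can be centered at an arbitrary point. By Theorem~\ref{t:Char-kFU}, it suffices to show that every point $x\in X$ is contained in \emph{some} dense $\kappa$-Fr\'echet--Urysohn subspace of $X$; the set $Y$ itself need not contain $x$, but homogeneity will repair this.

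First I would fix an arbitrary $x\in X$. Since $X$ is nonempty and $Y$ is dense, $Y$ is nonempty, so I can pick a base point $y_0\in Y$. Appealing to homogeneity, I choose a homeomorphism $h\colon X\to X$ with $h(y_0)=x$; in the special case of a topological group this is just the left translation $g\mapsto (xy_0^{-1})g$. I then propose to work with the image $h(Y)$ and to check the three ingredients needed for Theorem~\ref{t:Char-kFU}: (i) $x=h(y_0)\in h(Y)$; (ii) $h(Y)$ is dense, since a homeomorphism commutes with closure, whence $\cl_X\big(h(Y)\big)=h\big(\cl_X(Y)\big)=h(X)=X$; and (iii) $h(Y)$ is $\kappa$-Fr\'echet--Urysohn, because $h$ restricts to a homeomorphism of the subspace $Y$ onto the subspace $h(Y)$ and the $\kappa$-Fr\'echet--Urysohn property is a topological invariant. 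With (i)--(iii) in hand, $h(Y)$ is a dense $\kappa$-Fr\'echet--Urysohn subspace of $X$ containing $x$, and since $x$ was arbitrary, Theorem~\ref{t:Char-kFU} yields that $X$ is $\kappa$-Fr\'echet--Urysohn.

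There is no serious obstacle here: the three verifications are routine, as each is merely the statement that density, membership, and the $\kappa$-Fr\'echet--Urysohn property are preserved under the homeomorphism $h$. The only real content is the reduction itself, namely recognizing that Theorem~\ref{t:Char-kFU} demands a dense $\kappa$-Fr\'echet--Urysohn subspace through \emph{each} point, and that homogeneity is precisely the tool that upgrades a \emph{single} such subspace $Y$ into one passing through every point. I expect the argument to go through verbatim for any space on which the homeomorphism group acts transitively, the topological-group case being simply the cleanest instance.
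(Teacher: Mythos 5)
Your proposal is correct and is essentially identical to the paper's own proof: both fix $y_0\in Y$, use homogeneity to find a homeomorphism $h$ with $h(y_0)=x$, observe that $h(Y)$ is a dense $\kappa$-Fr\'echet--Urysohn subspace containing $x$, and invoke Theorem~\ref{t:Char-kFU}. The only difference is that you spell out the routine verifications (density and topological invariance) that the paper leaves implicit.
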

\begin{proof}
Fix arbitrarily $y_0\in Y$. Let $x\in X$. Take a homeomorphism $h$ of $X$ such that $h(y_0)=x$. Then $x\in h(Y)$ and $h(Y)$ is a $\kappa$-Fr\'{e}chet--Urysohn space. Therefore, each element of $X$ is contained in a dense $\kappa$-Fr\'{e}chet--Urysohn subspace of $X$ and Theorem \ref{t:Char-kFU} applies. \qed
\end{proof}

In \cite[Theorem~4.1]{LiL} Liu and Ludwig proved that the product of a family of bi-sequential spaces is $\kappa$-Fr\'{e}chet--Urysohn. Note that any countable product of bi-sequential spaces is bi-sequential, see \cite[Proposition~3.D.3]{Michael-quotient}. On the other hand, countable products of $W$-spaces are $W$-spaces (\cite[Theorem~4.1]{Gruenhage-games}) and there are $W$-spaces which are not bi-sequential (\cite[Example~5.1]{Gruenhage-games}). Taking into account that bi-sequential spaces and $W$-spaces are Fr\'{e}chet--Urysohn spaces, the next corollary essentially generalizes Theorem 4.1 of \cite{LiL}.

\begin{corollary} \label{c:kFU-product}
Let $\{X_i:i\in I\}$ be a family of topological spaces such that $\prod_{i\in I'}X_i$ is Fr\'echet--Urysohn for any countable subset $I'$ of $I$. Then the space $X=\prod_{i\in I}X_i$ is $\kappa$-Fr\'{e}chet--Urysohn.
\end{corollary}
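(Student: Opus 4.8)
The plan is to deduce Corollary \ref{c:kFU-product} from Theorem \ref{t:Char-kFU} by exhibiting, through every point of the full product, a dense subspace that is Fr\'echet--Urysohn (hence $\kappa$-Fr\'echet--Urysohn). The natural candidate is the $\Sigma$-product centred at a chosen point: fix a point $z=(z_i)_{i\in I}\in X$ and let $Y$ be the set of all $y=(y_i)_{i\in I}\in X$ such that $y_i\ne z_i$ for only countably many indices $i$. This $Y$ is dense in the Tychonoff product $X=\prod_{i\in I}X_i$, since any basic open set constrains only finitely many coordinates and so meets $Y$, and of course $z\in Y$.

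The key step is then to show that this $\Sigma$-product $Y$ is Fr\'echet--Urysohn under the stated hypothesis. First I would reduce to a countable subproduct: given $y\in\overline{A}$ for some $A\subseteq Y$ with $y\in Y$, I want to find a sequence in $A$ converging to $y$. Because each point of $Y$ and each relevant approximating point differs from $z$ in only countably many coordinates, the closure computation localizes to a countable set $I'\subseteq I$ of coordinates. On the subproduct $\prod_{i\in I'}X_i$, which is Fr\'echet--Urysohn by hypothesis, one extracts a convergent sequence, and then lifts it back to $Y$ by setting the remaining coordinates equal to those of $z$ (or of $y$). The point is that convergence in the product is coordinatewise, so controlling the countably many ``active'' coordinates controls convergence in $X$.

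Once $Y$ is known to be Fr\'echet--Urysohn, it is in particular $\kappa$-Fr\'echet--Urysohn, and since $z\in Y$ was the only constraint, this produces for \emph{each} point $z\in X$ a dense $\kappa$-Fr\'echet--Urysohn subspace containing it. Theorem \ref{t:Char-kFU} then immediately yields that $X$ is $\kappa$-Fr\'echet--Urysohn, completing the argument. Note this approach does not even need the homogeneity-flavoured Corollary \ref{c:kFU-dense-subgroup}, since we build a tailored dense subspace through every point directly.

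The main obstacle is the careful bookkeeping in the ``reduction to a countable subproduct'' step: one must verify that when $y\in\cl_Y(A)$, the witnessing approximation can be confined to a single countable index set $I'$ capturing the supports of $y$ and of the approximating points, and that a sequence converging in $\prod_{i\in I'}X_i$ genuinely lifts to a sequence converging to $y$ in $Y$ (and hence in $X$). The subtlety is ensuring the lifted sequence stays inside $Y$ and inside $A$ simultaneously; this requires choosing the approximating points to already lie in $A$ and agree with $z$ outside $I'$, which is exactly what membership in the $\Sigma$-product guarantees. Handling this consistently across all of $A$ is where the genuine content lies; the coordinatewise nature of product convergence makes the rest routine.
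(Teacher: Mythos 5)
Your overall skeleton is exactly the paper's: through every point $z\in X$ one exhibits a dense $\kappa$-Fr\'echet--Urysohn subspace and invokes Theorem \ref{t:Char-kFU}. The paper does this with the smaller $\sigma$-product $\sigma(z)$ (points differing from $z$ in only \emph{finitely} many coordinates), whose Fr\'echet--Urysohnness it simply quotes from Proposition 2.6 of \cite{GGKZ}; you use the $\Sigma$-product (countable supports) and try to prove its Fr\'echet--Urysohn property yourself. That lemma is in fact true, but your argument for it has a genuine gap at the decisive step, the ``localization to a countable subproduct.''

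The gap is this: from $y\in\cl_Y(A)$ you want a single countable $I'\subseteq I$ ``capturing the supports of $y$ and of the approximating points'' so that a sequence found in $\prod_{i\in I'}X_i$ lifts back. This is circular: the approximating points are exactly what you are trying to produce, and nothing in the hypothesis $y\in\cl_Y(A)$ confines the witnesses of the closure to points whose supports lie in any pre-assigned countable set --- the members of $A$ meeting a given neighborhood of $y$ may have supports spread over uncountably many coordinates. In particular your remark that the approximating points ``agree with $z$ outside $I'$, which is exactly what membership in the $\Sigma$-product guarantees'' is not correct: $\Sigma$-membership gives each point \emph{some} countable support, not a support inside a fixed $I'$. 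The standard repair is an $\omega$-stage closing-off argument (as in Noble's theorem on $\Sigma$-products of first-countable spaces). Set $J_0:=\{i\in I: y_i\neq z_i\}$. Given a countable $J_n$, let $\pi_{J_n}$ be the projection onto $\prod_{i\in J_n}X_i$; then $\pi_{J_n}(y)\in\overline{\pi_{J_n}(A)}$, so by the hypothesis that this countable subproduct is Fr\'echet--Urysohn there is a sequence $(a^n_k)_{k\in\NN}$ in $A$ with $\pi_{J_n}(a^n_k)\to\pi_{J_n}(y)$; put $J_{n+1}:=J_n\cup\bigcup_{k}\{i: (a^n_k)_i\neq z_i\}$, which is again countable. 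With $J:=\bigcup_n J_n$ and $A':=\{a^n_k: n,k\in\NN\}$, every point of $A'\cup\{y\}$ has support in $J$, and $y\in\overline{A'}$: a basic neighborhood of $y$ constrains finitely many coordinates, those outside $J$ are satisfied automatically (everything there equals $z$), and those inside $J$ lie in some $J_n$ and are satisfied by a tail of $(a^n_k)_k$. Now $A'\cup\{y\}$ lies in the copy of $\prod_{i\in J}X_i$ sitting inside $Y$ (all other coordinates frozen at $z$), which is Fr\'echet--Urysohn by hypothesis, so some sequence in $A'\subseteq A$ converges to $y$. With this lemma supplied, your deduction via Theorem \ref{t:Char-kFU} is correct; alternatively, you could use $\sigma(z)$ exactly as the paper does and cite \cite{GGKZ}, which is why the paper's proof is only a few lines long.
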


\begin{proof}
For every $z=(z_i)\in X$, set
\[
\sigma(z) := \big\{ x=(x_i)\in X: \{ i: x_i\not= z_i\} \mbox{ is finite}\big\}.
\]
Clearly, $\sigma(z)$ is a dense subspace of $X$. Proposition %\ref{p:Noble-Sigma-product}
2.6 of \cite{GGKZ} states that $\sigma(z)$ is Fr\'echet--Urysohn. By  Theorem \ref{t:Char-kFU}, $X$ is $\kappa$-Fr\'{e}chet--Urysohn. \qed
\end{proof}

Below we prove Theorem \ref{t:k-FU-seq-Ascoli}.

\smallskip
%\begin{proof}
{\em Proof of Theorem \ref{t:k-FU-seq-Ascoli}.}
Suppose for a contradiction that $X$ is not an Ascoli space. Then there exists a compact set $K$ in $\CC(X)$ which is not equicontinuous at some point $z\in X$. Therefore there is $\e_0 >0$ such that for every open neighborhood $U$ of $z$ there exists a function $f_U\in K$ for which the open set $W_{f_U} :=\{ x\in U: | f_U (x)- f_U(z)|> \e_0\}$ is not empty. Set
\[
W:= \bigcup\{ W_{f_U}: U \mbox{ is an open neighborhood of } z\}.
\]
Then $W$ is an open subset of $X$ such that $z\in\overline{W}\setminus W$. As $X$ is $\kappa$-Fr\'{e}chet--Urysohn, there is a sequence $\{ x_n :n\in\NN\} \subseteq W$ converging to $z$. For every $n\in\NN$, choose an open neighborhood $U_n$ of $z$ such that $x_n\in W_{f_{U_n}}  (\subseteq U_n)$ and, therefore,
\begin{equation} \label{equ:kFU-Ascoli-1}
| f_{U_n} (x_n)- f_{U_n}(z)|> \e_0 \quad (\mbox{for all } n\in\NN).
\end{equation}
Set $S:= \{ x_{n}: n\in\NN\} \cup \{ z\}$. Then $S$ is a compact subset of $X$. Denote by $p$ the restriction map $p: \CC(X)\to \CC(S), p(f)=f|_S$. Then $p(K)$ is a compact subset of the Banach space $\CC(S)$. Applying the Ascoli theorem to the compact space $S$ we obtain that the sequence $\{ p(f_{U_n})\}_{n\in\NN}\subseteq p(K)$ is equicontinuous at $z\in S$ and, therefore, there is an $N\in\NN$ such that
\[
\big| f_{U_n} (x_{i}) - f_{U_n}(z) \big| <\frac{\e_0}{2} \; \; \mbox{ for all } \, i\geq N \, \mbox{ and } \, n\in\NN.
\]
In particular, for $i=n=N$ we obtain
$
\big| f_{U_N} (x_{N}) - f_{U_N}(z) \big| <\frac{\e_0}{2}.
$
But this contradicts (\ref{equ:kFU-Ascoli-1}). Thus $X$ is an Ascoli space. \qed
%\end{proof}

\smallskip
The next corollary strengthens Theorem 1.3 of \cite{GGKZ}.
\begin{corollary} \label{t:Cech-complete-Cp-Ascoli}
Let $X$ be a \v{C}ech-complete space. Then $C_p(X)$ is Ascoli if and only if $X$ is scattered.
\end{corollary}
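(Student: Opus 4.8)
The plan is to route everything through the characterization already in hand, namely Corollary \ref{t:Cp-seq-Ascoli}, which says that $C_p(X)$ is Ascoli if and only if $X$ has the property $(\kappa)$. This reduces the statement to a purely internal property of $X$: for a \v{C}ech-complete space $X$, I must show that $X$ has the property $(\kappa)$ if and only if $X$ is scattered. This is precisely where the corollary strengthens Theorem~1.3 of \cite{GGKZ}, since the latter characterizes scatteredness through the (formally stronger) $\kappa$-Fr\'echet--Urysohn property of $C_p(X)$, whereas now the a~priori weaker Ascoli property suffices. Two observations make the reduction clean and will be recorded first: the property $(\kappa)$ is \emph{hereditary} (given disjoint finite $F_n\subseteq Y\subseteq X$, apply $(\kappa)$ in $X$ to get a subsequence with a point-finite open expansion $\{U_k\}$ in $X$, and note $\{U_k\cap Y\}$ is a point-finite open expansion in $Y$), and scatteredness is likewise hereditary.

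For the forward direction I would argue the contrapositive: if the \v{C}ech-complete space $X$ is not scattered, then its perfect kernel (the largest dense-in-itself subset) is a nonempty closed subspace $P$, which is again \v{C}ech-complete and has no isolated points. A standard Cantor scheme, built from a complete sequence of open covers witnessing \v{C}ech-completeness, then produces a closed copy of $\cantor$ inside $P$. Combining this with the hereditarity of $(\kappa)$, it suffices to know that $\cantor$ itself fails the property $(\kappa)$; since $X$ then contains a subspace without $(\kappa)$, it cannot have $(\kappa)$ either, and so $C_p(X)$ is not Ascoli.

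For the reverse direction I would show directly that a scattered space has the property $(\kappa)$, arguing by induction on Cantor--Bendixson rank. The base mechanism is that sets of isolated points admit clopen expansions: if each $F_n$ consists of isolated points, then one may take $U_n=F_n$, and disjointness of the $F_n$ gives point-finiteness outright. The inductive step handles the finitely many points of each $F_n$ of top rank by using that such a point is isolated in the corresponding derived set, letting one separate it from the lower-rank part by a clopen neighborhood and then pass to a subsequence.

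The main obstacle is the substantive topological input that $\cantor$ does \emph{not} have the property $(\kappa)$: one must exhibit a single sequence of pairwise disjoint finite subsets of $\cantor$ admitting no strongly point-finite subsequence. The difficulty is that naive ``spread-out'' or ``clustered'' choices of the $F_n$ all admit a point-finite expansion, because the adversary may shrink neighborhoods along the tree to deep basic clopen sets and thereby disjoin them; the witnessing sequence must instead force internal accumulations of $\bigcup_n F_n$ so that no open expansion of any subsequence can avoid creating a point of infinite multiplicity. This is exactly the non-formal content carried by \cite{GGKZ}, and I would invoke it (or reprove it via the tree/complete-sequence structure of $\cantor$) at this one point; the remaining implications are formal consequences of Corollary~\ref{t:Cp-seq-Ascoli} together with the two heredity remarks above.
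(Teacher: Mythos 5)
Your reduction of both directions to the property $(\kappa)$ via Corollary~\ref{t:Cp-seq-Ascoli} is the right frame (it is how the paper handles the ``scattered $\Rightarrow$ Ascoli'' direction), but the paper then simply cites two external results --- Theorem~1.3 of \cite{GGKZ} for the forward direction and Corollary~3.8 of \cite{Sak2} (scattered \v{C}ech-complete spaces have property $(\kappa)$) for the converse --- whereas you attempt to reprove both from scratch, and each of your two reproofs has a genuine gap. In the forward direction, the step ``a nonempty dense-in-itself \v{C}ech-complete space contains a closed copy of $2^\omega$'' is false outside the metrizable setting: $X=\beta\omega$ is compact (hence \v{C}ech-complete) and not scattered, yet it contains no nontrivial convergent sequence, hence no copy of $2^\omega$ (nor of any infinite compact metrizable space). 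A Cantor scheme in the perfect kernel $\omega^*$ only yields a compact subset admitting a continuous surjection \emph{onto} $2^\omega$, and the heredity of property $(\kappa)$ gives you nothing for quotients: property $(\kappa)$ does not obviously pull back along continuous surjections, since images of the point-finite open expansions need not be open or point-finite. So your argument is silent precisely on spaces like $\beta\omega$, which the corollary must cover; this is why the paper leans on \cite{GGKZ} rather than on an embedded Cantor set plus heredity.

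In the reverse direction the problem is worse: you claim that \emph{every} scattered space has property $(\kappa)$, never using \v{C}ech-completeness, by an induction on Cantor--Bendixson rank. First, the mechanism you describe is unavailable: scattered Tychonoff spaces need not be zero-dimensional (Solomon's classical example), so a point isolated in a derived set $X^{(\alpha)}$ need not admit a clopen neighborhood separating it from the lower-rank part. Second, and more substantively, the sketch skips exactly the hard point: a point of positive rank only has neighborhoods whose traces on the lower Cantor--Bendixson levels are governed by an arbitrary filter, and nothing in disjointness of the $F_n$ controls how these traces overlap; making infinitely many such expansions simultaneously point-finite (after a subsequence) is the entire content of the theorem, and it is here that the literature uses completeness (this is why Sakai's Corollary~3.8, which the paper cites, is stated for \v{C}ech-complete scattered spaces, and why the corollary under review carries the \v{C}ech-completeness hypothesis in both directions). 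If your two-line induction were correct, the completeness assumptions pervading \cite{Sak2} and \cite{GGKZ} would be superfluous; that alone should have been a warning sign. To repair the proposal along the paper's lines you would keep the reduction via Corollary~\ref{t:Cp-seq-Ascoli} for the converse, but cite (or genuinely reprove, with completeness) Sakai's result, and cite \cite[Theorem~1.3]{GGKZ} for the forward direction instead of the Cantor-set argument.
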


\begin{proof}
If $C_p(X)$ is Ascoli, then $X$ is scattered by Theorem 1.3 of \cite{GGKZ}. Conversely, if $X$ is scattered, then, by Corollary 3.8 of \cite{Sak2}, $X$ has the property $(\kappa)$. Thus, by Corollary \ref{t:Cp-seq-Ascoli}, $C_p(X)$ is Ascoli.\qed
\end{proof}

Let $E$ be a locally convex space over a field $\mathbf{F}$, where $\mathbf{F}=\IR$ or $\mathbb{C}$, and let $E'$ the dual space of $E$. If $E$ is a Banach space, denote by $B$ the closed unit ball of $E$ and set
$
B_w:=\big(B,\sigma(E,E')|_B\big), %E_w:=\big(E,\sigma(E,E')\big) \mbox{ and } \;
$
where $\sigma(E,E')$ is the weak topology on $E$.

\begin{corollary} \label{c:Banach-kappa-FU}
{\rm (i)} If $E$ is a Banach space, then $B_w$ is $\kappa$-Fr\'{e}chet--Urysohn if and only if $E$ does not contain an isomorphic copy of $\ell_1$.

{\rm (ii)} A Fr\'{e}chet space $E$ over $\mathbf{F}$ is $\kappa$-Fr\'{e}chet--Urysohn in the weak topology if and only if $E=\mathbf{F}^N$ for some $N\leq\w$.

{\rm (iii)} If $X$ is a $\mu$-space and a $k_\IR$-space, then $\CC(X)$ is $\kappa$-Fr\'{e}chet--Urysohn in the weak topology if and only if $X$ is discrete.

{\rm (iv)} The weak${}^\ast$ dual space of a metrizable barrelled space $E$ is $\kappa$-Fr\'{e}chet--Urysohn if and only if $E$ is finite-dimensional.
\end{corollary}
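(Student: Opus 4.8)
The plan is to exploit Theorem \ref{t:k-FU-seq-Ascoli} in order to reduce each of the four equivalences to an already available characterization of the Ascoli property, and to recover the reverse implications from metrizability or from Corollary \ref{c:kFU-product}. In every part the scheme is the same: the forward implication reads ``$\kappa$-Fr\'echet--Urysohn $\Rightarrow$ Ascoli $\Rightarrow$ the stated structural condition'', where the first arrow is Theorem \ref{t:k-FU-seq-Ascoli} and the second is the corresponding Ascoli characterization taken from \cite{Gabr-LCS-Ascoli,Gab-LF,GKP}; the reverse implication produces a Fr\'echet--Urysohn (or at least a $\kappa$-Fr\'echet--Urysohn) topology out of the structural condition, and every Fr\'echet--Urysohn space is trivially $\kappa$-Fr\'echet--Urysohn.

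For (i) I would argue that if $B_w$ is $\kappa$-Fr\'echet--Urysohn then it is Ascoli by Theorem \ref{t:k-FU-seq-Ascoli}, so the known equivalence ``$B_w$ is Ascoli iff $E$ contains no copy of $\ell_1$'' forces $E$ not to contain $\ell_1$. For the converse I would invoke the classical fact that a Banach space without a copy of $\ell_1$ has $B_w$ Fr\'echet--Urysohn, a statement resting on Rosenthal's $\ell_1$ theorem; then $B_w$ is $\kappa$-Fr\'echet--Urysohn. In effect the four properties ``$B_w$ Fr\'echet--Urysohn'', ``$B_w$ $\kappa$-Fr\'echet--Urysohn'', ``$B_w$ Ascoli'' and ``$E$ contains no $\ell_1$'' coincide.

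For (ii) and (iv) the reverse implications are soft. In (ii) the space $\mathbf{F}^N$ carries its weak topology as the product (= pointwise) topology, which is metrizable when $N\le\w$ and hence Fr\'echet--Urysohn; in (iv), if $E$ is finite-dimensional then its weak${}^\ast$ dual is finite-dimensional, hence metrizable and Fr\'echet--Urysohn. The forward implications again pass through Theorem \ref{t:k-FU-seq-Ascoli} followed by the Ascoli characterizations of Fr\'echet spaces in the weak topology and of weak${}^\ast$ duals of metrizable barrelled spaces. Part (iii) is the one where the sandwich between Fr\'echet--Urysohn and Ascoli genuinely breaks: for a discrete $X$ the compact-open and the weak topologies on $\CC(X)$ both reduce to the product topology on $\mathbf{F}^X$, which is \emph{not} Fr\'echet--Urysohn once $X$ is uncountable. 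Here I would apply Corollary \ref{c:kFU-product} directly, since every countable subproduct of $\mathbf{F}^X$ is metrizable and therefore Fr\'echet--Urysohn, to conclude that $\mathbf{F}^X$ is $\kappa$-Fr\'echet--Urysohn; the forward direction uses Theorem \ref{t:k-FU-seq-Ascoli} together with the Ascoli characterization of $\CC(X)$ in the weak topology for $\mu$-spaces that are $k_\IR$-spaces.

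The main obstacle is not a single computation but the correct bookkeeping: for each class one must locate the right previously established Ascoli characterization to insert after Theorem \ref{t:k-FU-seq-Ascoli}. The only genuinely analytic ingredient is the reverse direction of (i), where the passage from ``no copy of $\ell_1$'' to the Fr\'echet--Urysohn property of $B_w$ needs Rosenthal's theorem rather than a formal argument. All remaining reverse directions are soft, resting solely on metrizability or on Corollary \ref{c:kFU-product}.
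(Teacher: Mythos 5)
Your proposal is correct and follows essentially the same route as the paper: each part is proved by the sandwich ``$\kappa$-Fr\'echet--Urysohn $\Rightarrow$ Ascoli (Theorem \ref{t:k-FU-seq-Ascoli}) $\Rightarrow$ structural condition'' via the cited Ascoli characterizations, with the reverse implications obtained from metrizability or Fr\'echet--Urysohnness (the paper packages the $\ell_1$ case into the cited equivalence ``Ascoli $\Leftrightarrow$ Fr\'echet--Urysohn $\Leftrightarrow$ no copy of $\ell_1$'' rather than invoking Rosenthal's theorem separately, and in (iii) it cites Fact 1.2 of \cite{LiL} where you use Corollary \ref{c:kFU-product}, but these are immaterial variations).
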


\begin{proof}
(i) Theorem 1.9 of \cite{GKP} or Theorem 6.1.1 and Corollary 1.7 of \cite{Banakh-Survey} state that $B_w$ is Ascoli if and only if $B_w$ is Fr\'{e}chet--Urysohn if and only if $E$ does not contain an isomorphic copy of $\ell_1$. Now Theorem \ref{t:k-FU-seq-Ascoli} applies.

(ii) Corollary 1.7 of \cite{Gabr-LCS-Ascoli} states that $E$ is Ascoli in the weak topology if and only if $E=\mathbf{F}^N$ for some $N\leq\w$. This result and Theorem \ref{t:k-FU-seq-Ascoli} imply the desired.

(iii) Corollary 1.9 of \cite{Gabr-LCS-Ascoli} states that $\CC(X)$ is Ascoli in the weak topology if and only if $X$ is discrete. Now the assertion follows from Theorem \ref{t:k-FU-seq-Ascoli} and the fact that any product of metrizable spaces is $\kappa$-Fr\'{e}chet--Urysohn (see Fact 1.2 of \cite{LiL}).

(iv) Corollary 1.14 of \cite{Gabr-LCS-Ascoli} states that the weak${}^\ast$ dual space of $E$ is Ascoli if and only if $E$ is finite-dimensional, and Theorem \ref{t:k-FU-seq-Ascoli} applies. \qed
\end{proof}

Now we consider direct locally convex sums of locally convex spaces. The simplest infinite direct sum of lcs is the space $\phi$, the direct locally convex sum $\bigoplus_{n\in\NN} E_n$ with $E_n=\mathbf{F}$ for all $n\in\NN$. It is well known that $\phi$ is a sequential non-Fr\'{e}chet--Urysohn space, see Example 1 of \cite{nyikos}.

\begin{proposition} \label{p:kFU-direct-sum}
An infinite direct sum of (non-trivial) locally convex spaces is not $\kappa$-Fr\'{e}chet--Urysohn. In particular, $\phi$ is not a $\kappa$-Fr\'{e}chet--Urysohn space.
\end{proposition}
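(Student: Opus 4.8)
The plan is to reduce the general statement to the single space $\phi$ by a retraction argument, and then to exhibit by hand an \emph{open} witness to the failure of the $\kappa$-Fr\'echet--Urysohn property in $\phi$.

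\emph{Reduction to $\phi$.} Write the direct sum as $E=\bigoplus_{i\in I}E_i$ with each $E_i\ne\{0\}$. Since $I$ is infinite, I would fix a sequence $(i_n)_{n\in\NN}$ of distinct indices, choose $a_n\in E_{i_n}\setminus\{0\}$, and pick a continuous linear functional $\chi_n$ with $\chi_n(a_n)=1$ (possible by the Hahn--Banach theorem, the $E_{i_n}$ being non-trivial). Using the universal property of the locally convex direct sum one checks that the linear maps $s\colon\phi\to E$, $s((t_n)_n)=\sum_n t_na_n$, and $q\colon E\to\phi$, $q(\sum_i x_i)=(\chi_n(x_{i_n}))_n$, are both continuous and satisfy $q\circ s=\mathrm{id}_\phi$. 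Hence $s$ is a homeomorphism of $\phi$ onto $L:=s(\phi)$ with inverse $q|_L$, and $r:=s\circ q\colon E\to E$ is a continuous retraction of $E$ onto $L$. I would then record the elementary fact that the $\kappa$-Fr\'echet--Urysohn property passes to retracts: if $X$ is $\kappa$-Fr\'echet--Urysohn and $r\colon X\to A$ is a retraction, then for open $V\subseteq A$ and $a\in\cl_A(V)$ the set $r^{-1}(V)$ is open in $X$, contains $a$ in its closure (because $V\subseteq r^{-1}(V)$), and any sequence in $r^{-1}(V)$ converging to $a$ is mapped by $r$ to a sequence in $V$ converging to $a$. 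Consequently, if $E$ were $\kappa$-Fr\'echet--Urysohn then so would be its retract $L\cong\phi$; thus it suffices to prove that $\phi$ is \emph{not} $\kappa$-Fr\'echet--Urysohn.

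\emph{The structure of convergence in $\phi$.} The decisive finite-dimensional phenomenon is that $\phi=\varinjlim_N\mathbf F^N$ is a strict inductive limit, so every bounded subset of $\phi$, in particular every convergent sequence together with its limit, is contained in some step $\mathbf F^N$. Therefore a sequence converges to $0$ in $\phi$ if and only if it lies in some $\mathbf F^N$ and converges to $0$ there; and since each $\mathbf F^N$ is metrizable, no sequence from a set $U$ converges to $0$ precisely when $0\notin\cl_\phi(U\cap\mathbf F^N)$ for every $N$. Hence to show $\phi$ is not $\kappa$-Fr\'echet--Urysohn it is enough to produce an open set $U\subseteq\phi$ with $0\in\cl_\phi(U)$ but $0\notin\cl_\phi(U\cap\mathbf F^N)$ for all $N$, where I use the standard neighbourhood base $U_\lambda=\{x:\sum_j|x_j|/\lambda_j<1\}$, $\lambda\in(0,\infty)^{\NN}$.

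\emph{The open witness (main obstacle).} I expect this last step to be the crux. I would start from the Arens-type double family $a_{n,k}:=\tfrac1n e_1+\tfrac1k e_{n+1}$ $(n,k\in\NN)$. Given $\lambda$, one lands $a_{n,k}$ in $U_\lambda$ by taking first $n$ and then $k$ large, so $0\in\cl_\phi\{a_{n,k}\}$; on the other hand $a_{n,k}\in\mathbf F^N$ only when $n+1\le N$, and then its first coordinate is $\tfrac1n\ge\tfrac1{N-1}$, so $\{a_{n,k}\}\cap\mathbf F^N$ stays away from $0$. This already yields non-Fr\'echet--Urysohnness, but the witness is not open. To upgrade it I would replace each point by a basic open neighbourhood $O_{n,k}=a_{n,k}+U_{\mu^{(n,k)}}$ made \emph{thin in the two active coordinates}, namely $\mu^{(n,k)}_1<\tfrac1{2n}$ and $\mu^{(n,k)}_{n+1}<\tfrac1k$. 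Then every point of $O_{n,k}$ keeps its first coordinate above $\tfrac1{2n}$ and keeps its $(n+1)$-st coordinate nonzero; the latter forces $O_{n,k}\cap\mathbf F^N=\emptyset$ for $N\le n$, while for $N\ge n+1$ the former keeps the first coordinate of every point of $O_{n,k}\cap\mathbf F^N$ at least $\tfrac1{2(N-1)}$. Setting $U:=\bigcup_{n,k}O_{n,k}$ gives an open set with $0\in\cl_\phi(U)$ (as $\{a_{n,k}\}\subseteq U$) and with $U\cap\mathbf F^N$ bounded away from $0$ for every $N$, exactly the required witness. The only place where real care is needed is this simultaneous control of all finite-dimensional traces $U\cap\mathbf F^N$: the neighbourhoods $O_{n,k}$ must be thin enough in coordinates $1$ and $n+1$ to confine their intersections with each $\mathbf F^N$ away from the origin, while costing nothing for the closure statement since the centres $a_{n,k}$ remain in $U$.
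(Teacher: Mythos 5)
Your proposal is correct, and its overall architecture (reduce to $\phi$, then exhibit an Arens-type open witness at $0$) matches the paper's; but both main steps are executed by genuinely different means. For the reduction, the paper writes each $E_i$ as $\mathbf{F}\oplus E_i'$, so that $\phi$ becomes a direct summand of the sum, and then quotes Liu--Ludwig's result that the $\kappa$-Fr\'echet--Urysohn property is preserved by open maps (the projection onto $\phi$ being open); you instead build a continuous section $s\colon\phi\to E$ and projection $q\colon E\to\phi$ from Hahn--Banach functionals and prove, inline, that the property passes to retracts --- slightly more work, but self-contained where the paper cites. The more substantial divergence is in showing that no sequence of the open set $U$ converges to $0$. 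The paper does this by a direct diagonal argument: from a hypothetical sequence $f_j\in U_{n_j,k_j}$ converging to $0$ it deduces $n_j\to\infty$ and then constructs a single box neighborhood $V$ of $0$ (with $\varepsilon_{n_j}=\tfrac{1}{4k_j}$) that misses every $U_{n_j,k_j}$, hence the whole sequence --- this needs nothing beyond the explicit neighborhood base of $\phi$. You instead invoke the Dieudonn\'e--Schwartz-type localization property of the strict inductive limit $\phi=\varinjlim \mathbf{F}^N$ (bounded sets, hence convergent sequences with their limits, live in some step $\mathbf{F}^N$) and then verify that each trace $U\cap\mathbf{F}^N$ is bounded away from the origin in the first coordinate. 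Your route is more conceptual --- it isolates exactly why the failure is a finite-dimensional-trace phenomenon and would generalize to other strict inductive limits --- but it leans on a nontrivial structural theorem that you should cite explicitly, whereas the paper's argument is elementary and entirely self-contained at this point. Both witnesses themselves are essentially the same Arens fan: your $O_{n,k}$ (a thin basic neighborhood of $\tfrac1n e_1+\tfrac1k e_{n+1}$) plays the role of the paper's coordinate-defined set $U_{n,k}=\{f:|f(1)|>\tfrac{1}{2n},\ |f(n)|>\tfrac{1}{2k}\}$, and your lower bounds $|x_1|>\tfrac{1}{2n}$, $x_{n+1}\neq 0$ check out.
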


\begin{proof}
Let $L=\bigoplus_{i\in I} E_i$ be the direct locally convex sum of an infinite family $\{ E_i\}_{i\in I}$ of locally convex spaces. It is well known %(see Proposition \ref{p:Finite-direct-summund})
that every $E_i$ can be represented as a direct sum $\mathbf{F}\oplus E'_i$. Therefore $L$ contains $\phi$ as a direct summand. Since the projection of $L$ onto $\phi$ is open and the $\kappa$-Fr\'{e}chet--Urysohn property is preserved under open maps (see Proposition 3.3 of \cite{LiL}), it is sufficient to show that $\phi$ is not a $\kappa$-Fr\'{e}chet--Urysohn space.

We consider elements of $\phi$ as functions from $\NN$ to $\mathbf{F}$ with finite support. Recall that the sets of the form
\begin{equation} \label{equ:kFU-Ascoli-2}
\{ f\in \phi: |f(n)|<\e_n \mbox{ for every } n\in\NN\},
\end{equation}
where $\e_n >0$ for all $n\in\NN$, form a basis at $0$ of $\phi$ (see for example \cite[Example~1]{nyikos}).
For every $n,k\in\NN$, set
\[
U_{n,k}:= \left\{ f\in \phi: |f(1)|>\frac{1}{2n} \mbox{ and } |f(n)|>\frac{1}{2k} \right\},
\]
and set $U:= \bigcup_{n,k\in\NN} U_{n,k}$. It is easy to see that all the sets $U_{n,k}$ are open in $\phi$ and $0\not\in U_{n,k}$. Hence $U$ is an open subset of $\phi$ such that $0\not\in U$. To show that $\phi$ is not $\kappa$-Fr\'{e}chet--Urysohn, it suffices to prove that (A) $0\in \overline{U}$, and (B) there is no a sequence in $U$ converging to $0$.

(A) Let $W$ be a basic neighborhood of zero in $\phi$ of the form (\ref{equ:kFU-Ascoli-2}). Choose an $n\in\NN$ such that $\frac{1}{n}<\e_1$, and take $k\in\NN$ such that $\frac{1}{k}<\e_n$. It is clear that $U_{n,k}\cap W$ is not empty. Thus $0\in \overline{U}$.

(B) Suppose for a contradiction that there is a sequence $S=\{ f_j\}_{j\in\NN}$ in $U$ converging to $0$. For every $j\in\NN$, take $n_j,k_j\in\NN$ such that $f_j \in U_{n_j,k_j}$. Since $f_j \to 0$, the definition of $U_{n,k}$ implies that $\frac{1}{2n_j}<|f_j(1)|\to 0$, and hence $n_j \to \infty$. Without loss of generality we can assume that $1<n_1 <n_2<\cdots$. For every $n\in\NN$, define $\e_n =\frac{1}{4k_j}$ if $n=n_j$ for some $j\in\NN$, and $\e_n =1$ otherwise. Set
\[
V:=\{f\in \phi: |f(n)|<\e_n  \mbox{ for every } n\in\NN \}.
\]
Then, $V$ is a neighborhood of $0$, and the construction of $U_{n,k}$ implies that $V\cap U_{n_j,k_j} =\emptyset$ for every $j\in\NN$. Thus $S\cap V=\emptyset$ and hence $f_j \not\to 0$, a contradiction. \qed
\end{proof}

Recall that a {\em strict $(LF)$-space} $E$ is the direct limit $E=\SI E_n$ of an increasing sequence
\[
E_0 \hookrightarrow E_1 \hookrightarrow E_2 \hookrightarrow \cdots
\]
of Fr\'{e}chet (= locally convex complete metric linear) spaces in the category of locally convex spaces and continuous linear maps. The space $\mathcal{D}(\Omega)$ of test functions is one of the most famous and important examples of  strict $(LF)$-spaces.

\begin{corollary} \label{t:kFU-strict-LF}
A strict $(LF)$-space $E$ is $\kappa$-Fr\'{e}chet--Urysohn if and only if $E$ is a Fr\'{e}chet space.
\end{corollary}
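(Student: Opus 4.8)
The plan is to prove both implications, the nontrivial one by reducing to the space $\phi$ and invoking Proposition \ref{p:kFU-direct-sum}, exactly in the spirit of its proof. The easy direction is immediate: if $E$ is a Fr\'echet space it is metrizable, hence Fr\'echet--Urysohn, hence $\kappa$-Fr\'echet--Urysohn. For the converse I argue by contraposition and show that if the strict $(LF)$-space $E=\SI E_n$ is \emph{not} Fr\'echet, then it contains a complemented copy of $\phi$. Since the projection onto a topological direct summand is open and the $\kappa$-Fr\'echet--Urysohn property is preserved by open maps (\cite[Proposition~3.3]{LiL}), Proposition \ref{p:kFU-direct-sum} will then force $E$ not to be $\kappa$-Fr\'echet--Urysohn.

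To produce the copy of $\phi$ I first note that if $E$ is not Fr\'echet the defining sequence $(E_n)$ cannot stabilize, so after passing to a cofinal subsequence I may assume that each $E_n$ is a proper closed subspace of $E_{n+1}$. Pick $x_n\in E_{n+1}\setminus E_n$. Since in a strict $(LF)$-space each $E_n$ is closed in $E$ and carries its own topology, the Hahn--Banach theorem yields $f_n\in E'$ with $f_n|_{E_n}=0$ and $f_n(x_n)=1$; note that $f_m(x_n)=0$ whenever $m\ge n+1$. A triangular, Gram--Schmidt-type correction then replaces the $x_n$ by vectors $y_n\in\spn\{x_1,\dots,x_n\}$ satisfying the biorthogonality relations $f_m(y_n)=\delta_{mn}$. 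Because every $v\in E$ lies in some $E_N$, forcing $f_n(v)=0$ for $n\ge N$, the formula $P(v):=\sum_n f_n(v)\,y_n$ is a \emph{finite} sum and defines a linear projection of $E$ onto $M:=\spn\{y_n\}$.

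It remains to check that $P$ is continuous and that $M\cong\phi$. Continuity of $P$ follows by testing on each step $E_N$, where $P$ agrees with the finite sum $\sum_{n<N} f_n(\cdot)\,y_n$, and then invoking the universal property of the inductive limit; hence $M$ is a topological direct summand of $E$. In the same way the map $Q:E\to\phi$, $Q(v):=(f_n(v))_n$, is well defined and continuous on each $E_N$, hence continuous on $E$, and its restriction to $M$ is the inverse of the natural continuous bijection $\phi\to M$, $(c_n)\mapsto\sum c_n y_n$. Thus $M$ inherits the finest locally convex topology, i.e. $M\cong\phi$, and $\phi$ is an open image of $E$, which completes the argument.

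The main obstacle is precisely this last point: verifying that the subspace $M$ carries the $\phi$-topology and not some strictly coarser one, which is exactly what the continuity of $Q$ into $\phi$ provides. This is where the strict $(LF)$-structure is essential — each step being closed with its own topology, together with the Hahn--Banach extension of functionals from the steps to $E$. Alternatively, one may simply quote the known structural fact that a non-Fr\'echet strict $(LF)$-space contains a complemented copy of $\phi$ and bypass the explicit construction, after which the reduction to Proposition \ref{p:kFU-direct-sum} is the same.
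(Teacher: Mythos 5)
Your proof is correct, but it takes a genuinely different route from the paper's. The paper's proof is a two-line deduction from heavy machinery: it quotes Theorem~1.2 of \cite{Gab-LF} (a strict $(LF)$-space is Ascoli if and only if it is a Fr\'echet space or equals $\phi$), combines this with Theorem \ref{t:k-FU-seq-Ascoli} (every $\kappa$-Fr\'echet--Urysohn space is Ascoli, the main theorem of the paper), and then excludes the case $E=\phi$ by Proposition \ref{p:kFU-direct-sum}. You bypass both the Ascoli property and the external classification entirely: you prove the structural fact that a non-Fr\'echet strict $(LF)$-space contains a complemented copy of $\phi$, and then run the same open-map reduction to Proposition \ref{p:kFU-direct-sum} that the paper itself uses inside the proof of that proposition. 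Your construction is sound: strictness guarantees each $E_n$ is closed in $E$ and carries its induced topology, so Hahn--Banach yields the functionals $f_n$; the relations $f_m(x_k)=0$ for $m>k$ and $f_k(x_k)=1$ make the matrix $\big(f_m(x_k)\big)_{m,k<n}$ unitriangular, so the biorthogonal system $f_m(y_n)=\delta_{mn}$ is solvable; $P(v)=\sum_n f_n(v)y_n$ is a finite sum since $v\in E_N$ forces $f_n(v)=0$ for $n\ge N$; and continuity of $P$ and of $Q(v)=(f_n(v))_n$ follows stepwise from the universal property of the inductive limit, with $Q|_M$ inverse to the (automatically continuous) map $\phi\to M$, so $M\cong\phi$ and the projection onto $M$ is open. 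What each approach buys: yours is self-contained and elementary, replacing the nontrivial result of \cite{Gab-LF} by Hahn--Banach plus linear algebra, and it establishes a stronger structural statement (complemented $\phi$ in any non-Fr\'echet strict $(LF)$-space, which is in fact a known result you could also have cited); the paper's route is shorter given its machinery, and it showcases the intended application of its main theorem, with the Ascoli property as intermediary.
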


\begin{proof}
Theorem 1.2 %\ref{t:Ascoli-strict-LF}
of \cite{Gab-LF} states that $E$ is an Ascoli space  if and only if $E$ is a Fr\'{e}chet space or $E=\phi$. Now the assertion follows from Theorem \ref{t:k-FU-seq-Ascoli} and Proposition \ref{p:kFU-direct-sum}. \qed
\end{proof}

One of the most important classes of locally convex spaces is the class of free locally convex spaces. Following \cite{Mar}, the {\em  free locally convex space}  $L(X)$ on a Tychonoff space $X$ is a pair consisting of a locally convex space $L(X)$ and  a continuous map $i: X\to L(X)$  such that every  continuous map $f$ from $X$ to a locally convex space  $E$ gives rise to a unique continuous linear operator ${\bar f}: L(X) \to E$  with $f={\bar f} \circ i$. The free locally convex space $L(X)$ always exists and is essentially unique.

\begin{corollary} \label{t:kFU-free-LCS}
Let $X$ be a Tychonoff space. Then $L(X)$ is a $\kappa$-Fr\'{e}chet--Urysohn space if and only if $X$ is finite.
\end{corollary}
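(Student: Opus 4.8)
The plan is to prove the two implications separately, obtaining the nontrivial direction by funnelling through the Ascoli property exactly as in the preceding corollaries of this section.

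First I would dispose of the sufficiency. If $X$ is finite, say $|X|=n$, then by the universal property the underlying vector space of $L(X)$ is the free vector space on the $n$-point set $X$, so $L(X)$ is $n$-dimensional. Since all Hausdorff vector topologies on a finite-dimensional space coincide, $L(X)$ is topologically isomorphic to $\mathbf{F}^n$; in particular it is metrizable. A metrizable space is Fr\'{e}chet--Urysohn and hence $\kappa$-Fr\'{e}chet--Urysohn (recall from the introduction that every Fr\'{e}chet--Urysohn space is $\kappa$-Fr\'{e}chet--Urysohn). Thus $L(X)$ is $\kappa$-Fr\'{e}chet--Urysohn.

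For the necessity I would argue through Theorem \ref{t:k-FU-seq-Ascoli}. Assume $L(X)$ is $\kappa$-Fr\'{e}chet--Urysohn. Then it is an Ascoli space by Theorem \ref{t:k-FU-seq-Ascoli}. At this point I would invoke the characterization of the Ascoli property for free locally convex spaces from \cite{Gabr-L(X)-Ascoli}, namely that $L(X)$ is Ascoli if and only if $X$ is finite. Combining the two facts yields that $X$ is finite, which completes the proof.

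The real content of the statement is carried by the cited Ascoli characterization; within the self-contained part of the argument there is no genuine obstacle. The decisive (and only non-formal) step is the reduction "$\kappa$-Fr\'{e}chet--Urysohn $\Rightarrow$ Ascoli" supplied by Theorem \ref{t:k-FU-seq-Ascoli}, which is precisely what makes the result of \cite{Gabr-L(X)-Ascoli} applicable. Note also that for the converse I do not need the "finite $\Rightarrow$ Ascoli" half of that characterization, since the sufficiency is obtained directly, and in fact more strongly, via metrizability of the finite-dimensional space $L(X)=\mathbf{F}^n$.
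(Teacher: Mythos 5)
Your sufficiency direction (finite $X$ gives $L(X)\cong\mathbf{F}^{n}$, which is metrizable, hence Fr\'{e}chet--Urysohn, hence $\kappa$-Fr\'{e}chet--Urysohn) is correct and is exactly what the paper leaves implicit. The necessity direction, however, rests on a mis-stated citation, and this is a genuine gap. Theorem 1.2 of \cite{Gabr-L(X)-Ascoli}, as quoted in this paper, says that $L(X)$ is Ascoli if and only if $X$ is a \emph{countable discrete} space --- not if and only if $X$ is finite. The two are genuinely different: for a countably infinite discrete space $D$, the space $L(D)$ is topologically isomorphic to $\phi$, and $\phi$ \emph{is} Ascoli (it is a strict $(LF)$-space, and by Theorem 1.2 of \cite{Gab-LF}, quoted in the proof of Corollary \ref{t:kFU-strict-LF}, the Ascoli strict $(LF)$-spaces are exactly the Fr\'{e}chet spaces and $\phi$). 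So from ``$L(X)$ is $\kappa$-Fr\'{e}chet--Urysohn, hence Ascoli by Theorem \ref{t:k-FU-seq-Ascoli}'' you may only conclude that $X$ is countable and discrete; finiteness does not follow from the Ascoli property alone.

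What is missing is precisely the step that rules out the countably infinite discrete case, and it is the reason the paper's proof invokes Proposition \ref{p:kFU-direct-sum}: if $X$ were countably infinite discrete, then $L(X)\cong\phi$ would be an infinite direct locally convex sum, hence not $\kappa$-Fr\'{e}chet--Urysohn by that proposition, contradicting the hypothesis. Inserting this one additional argument makes your proof complete and essentially identical to the paper's; without it, the claim you attribute to \cite{Gabr-L(X)-Ascoli} is simply false, and the proof does not go through.
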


\begin{proof}
It is well known that $L(D)$ over a countably infinite discrete space $D$ is topologically isomorphic to $\phi$.
By Theorem 1.2 %\ref{t:Ascoli-L(X)}
of \cite{Gabr-L(X)-Ascoli}, $L(X)$ is an Ascoli space if and only if $X$ is a countable discrete space. This fact, Theorem \ref{t:k-FU-seq-Ascoli} and Proposition \ref{p:kFU-direct-sum} immediately imply   the assertion. \qed
\end{proof}

%%%%%%%%%%%%%%%%%%%%%%%%%%%
%%%%%%%%%%%%%%%%%%%%%%%%%%%
%%%%%%%%%%%%%%%%%%%%%%%%%%%
%%%%%%%%%%%%%%%%%%%%%%%%%%%

\bibliographystyle{amsplain}

\end{document}